\documentclass[12pt]{article}

\usepackage{amsmath,amssymb,amscd,amsthm,amsfonts}
\usepackage{graphicx}
\usepackage{hyperref}
\usepackage{dsfont}
\usepackage{xcolor}

\newtheorem{theorem}{Theorem}[section]
\newtheorem*{theoremp}{Theorem}
\newtheorem{lemma}[theorem]{Lemma}

\newtheorem{corollary}[theorem]{Corollary}

\newtheorem*{problem}{Problem}
\newtheorem{definition}[theorem]{Definition}

\numberwithin{figure}{section}

\makeatletter
\newtheorem*{rep@theorem}{\rep@title}
\newcommand{\newreptheorem}[2]{
\newenvironment{rep#1}[1]{
 \def\rep@title{#2 \ref{##1}}
 \begin{rep@theorem}}
 {\end{rep@theorem}}}
\makeatother

\newreptheorem{theorem}{Theorem}

\newcommand{\rr}{\mathds{R}}

\DeclareMathOperator{\conv}{conv}
\DeclareMathOperator{\vol}{Vol}
\DeclareMathOperator{\diam}{diam}

\newcommand{\ff}{\mathcal{F}}

\newcounter{examplecounter}
\setcounter{examplecounter}{0}

\newcommand{\example}[1]{
	\stepcounter{examplecounter}{
		\noindent\textbf{Example~\theexamplecounter.}
		{ #1 \par}
		\bigskip
	}{}
}

\title{Positive-fraction intersection results and variations of weak epsilon-nets}

\author{Alexander Magazinov\thanks{Supported by ERC Advanced Research Grant no. 267165 (DISCONV).} \and Pablo Sober\'on}

\begin{document}

\maketitle

\begin{abstract}
Given a finite set $X$ of points in $R^n$ and a family $F$ of sets generated by the pairs of points of $X$, we determine volumetric and structural conditions for the sets that allow us to guarantee the existence of a positive-fraction subfamily $F'$ of $F$ for which the sets have non-empty intersection.  This allows us to show the existence of weak epsilon-nets for these families.  We also prove a topological variation of the existence of weak epsilon-nets for convex sets.
\end{abstract}

{\bf Keywords:} Weak Epsilon-Nets; Positive Fraction Intersection; Selection Theorem

{\bf AMS 2010 Classification:} 52A35, 52A30, 52A38

\section{Introduction}

The results of this paper were motivated by the following problem by Imre  B\'ar\'any.

\begin{problem}
For any two points $x,y \in \rr^n$, let $S(x,y)$ be the Euclidean ball with diameter $[x,y]$.  Find the optimal constant $b_n$, if it exists, depending only on the dimension such that for any set $X \subset \rr^n$ with $|X|= N$, there a point $z$ contained in at least $b_n N^2$ sets of the form $S(x,y)$ with $x, y \in X$.
\end{problem}

This is representative of a wide class of problems in discrete geometry which we call {\it positive-fraction problems}. A {\it positive-fraction problem}
is a problem that has the following setting. Suppose we are given a family $\mathcal{A}$ of sets in $\rr^n$, and assume that
the family $\mathcal A$ has some prescribed property (e.g., all members of $\mathcal A$ are convex, or $\mathcal A$
is the family of all $n$-simplices on given vertices, etc.). Determine if there exists $\alpha > 0$ depending only on the dimension $n$
and on the property of $\mathcal A$, but not on $|\mathcal{A}|$, such that there is a subfamily $\mathcal B \subset \mathcal A$
with non-empty intersection and $|\mathcal B| \geq \alpha |\mathcal A|$.

There are several well-known positive-fraction problems that have an affirmative answer (a {\it positive-fraction result}). Classic examples include the fractional Helly theorem by Katchalski and Liu \cite{Katchalski:1979bq} and the first selection lemma by B\'ar\'any \cite[Thm.~5.1]{Barany:1982va} 
(as called in \cite{Matousek:2002td}).

The first selection lemma states that for any finite set $X$ of points in $\rr^n$, there is a point $z$ contained in a positive fraction $c_n$ of the simplices spanned by $X$, where $c_n$ depends on the dimension.

In 1984 Boros and F\"uredi \cite{Boros:1984ba} proved that $c_2 \ge \frac29$.  Different proofs of this result have been discovered 
recently \cite{Bukh:2006wm, Fox:ks}. The cases $n=1,2$ are the only where the optimal constant is known \cite{Bukh:2010hz}.  The first high-dimensional version was proved by B\'ar\'any \cite{Barany:1982va}.  Namely,

\begin{theoremp}
[First selection lemma]
Let $S \subset \rr^n$, with $|S|=N$.  There is a constant $c_n$ depending only on the dimension such that we can always find an intersecting family $\ff$ of simplices with vertices in $S$ such that
\[
|\ff| \ge c_n \binom{N}{n+1}
\]	
\end{theoremp}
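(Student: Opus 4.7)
The plan is to combine Tverberg's theorem with B\'ar\'any's colored Carath\'eodory theorem: Tverberg produces a partition of $S$ into many blocks whose convex hulls share a common point $z$, and colored Carath\'eodory then extracts, from any $n+1$ of those blocks, a simplex through $z$ whose vertices lie one in each chosen block. Since the blocks are disjoint, distinct $(n+1)$-subsets of blocks automatically yield distinct simplices, so the count reduces to a single binomial coefficient.

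Concretely, I would first set $r = \lceil N/(n+1) \rceil$, so that $(n+1)(r-1)+1 \le N$ and Tverberg's theorem applies; it yields a partition $S = A_1 \sqcup \cdots \sqcup A_r$ (any leftover points, at most $n$ of them, can be adjoined to an existing block without harm) together with a point $z \in \bigcap_{i=1}^{r} \mathrm{conv}(A_i)$. Next, for every $(n+1)$-subset $I = \{i_1 < \cdots < i_{n+1}\} \subset \{1,\ldots,r\}$, apply colored Carath\'eodory with color classes $V_j = A_{i_j}$: since $z \in \mathrm{conv}(A_{i_j})$ for every $j$, there exist $v_j \in A_{i_j}$ with $z \in \mathrm{conv}(v_1,\ldots,v_{n+1})$. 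Writing $\sigma_I$ for this simplex, the disjointness of the blocks makes $I \mapsto \sigma_I$ injective (the block membership of the vertices recovers $I$), so at least $\binom{r}{n+1}$ simplices through $z$ arise. Finally, $\binom{r}{n+1}/\binom{N}{n+1} \to 1/(n+1)^{n+1}$ as $N \to \infty$, giving a constant $c_n$ of order $1/(n+1)^{n+1}$ after shrinking slightly to absorb small-$N$ boundary cases where $r < n+1$.

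The main obstacle is choosing the right tool rather than any hard technical step: once Tverberg and colored Carath\'eodory are in hand, the only real work is verifying injectivity of $I \mapsto \sigma_I$ and performing the boundary bookkeeping, both of which are routine. The constant $c_n = 1/(n+1)^{n+1}$ obtained this way is notoriously far from optimal (the sharp value is known only for $n \le 2$), but improving it is a separate, well-studied question and lies outside what a proof of the qualitative statement needs to address.
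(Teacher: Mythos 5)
The paper does not prove this statement; it quotes it from B\'ar\'any's 1982 paper, and your argument is exactly B\'ar\'any's original proof: Tverberg's theorem with $r=\lceil N/(n+1)\rceil$ parts followed by the colorful Carath\'eodory theorem applied to each $(n+1)$-subset of parts. The proposal is correct as written --- the choice of $r$, the injectivity of $I\mapsto\sigma_I$ via disjointness of the blocks, and the resulting constant of order $(n+1)^{-(n+1)}$ (after absorbing small $N$) are all handled properly.
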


There have been several improvements on the result above; either finding better bounds on the constant $c_n$ \cite{Gromov:2010eb, MW11, KMS12}, or requiring more conditions that the intersecting simplices have to satisfy, as in \cite{Pach:1980un, Karasev:2012bj, Jia14, soberon2015}.

While an $n$-dimensional simplex is a natural hull of $n + 1$ points, there are several ways to define a hull of two points in
$\rr^n$. One is, as in B\'ar\'any's problem, to consider Euclidean ball with the diameter
$[x, y]$ as a hull of two points $x, y \in \rr^n$. For this setup we prove a bound $b_n \geq \frac{1}{n + 1}$ in Theorem \ref{theorem-many-balls-intersect}.

One can notice that the first selection lemma itself gives a positive answer for B\'ar\'any's problem. Indeed, take $z$ to be
a point that is contained in a positive fraction of simplices with vertices in $X$. One can show that for each simplex
$\conv \{ x_1, x_2, \ldots, x_{n + 1} \} \ni z$ one has $z \in S(x_i, x_j)$ for at least $\frac{n + 1}{2}$ pairs $(i, j)$.
If we record all such pairs of points from all simplices containing $z$, then a pair of points cannot be mentioned more than
$\binom{|X|}{n - 1}$ times, so sufficiently many distinct pairs are mentioned. The arguments needed to show that $z$ is covered many times are similar to the ones used in~\cite{Barany:1987ki}.

However, the estimate for $b_n$ obtained by the double counting method above is only about $1 / (n!)$. At the moment the authors
are not aware of any shape $S(x, y)$ for which the double counting method with the first selection lemma gives the best constants. Therefore we leave its details to the interested reader.

It is interesting to consider ``thinner'' shapes as the hull of two points (i.e. the shapes that are more stretched along
the segment $[x, y]$ than a ball).  For this purpose we introduce the following definition of a $t$-shape (with $t \in (0, 1)$).

\begin{definition}\label{definition-t-shape}
Let $t > 0$.  A mapping $S$ from $\rr^n \times \rr^n$ to the set of measurable subsets of $\rr^n$ will be called a $t$-shape if for all $x\neq y$ one has $S(x,y) = S(y,x)$ and for all $r$ that satisfy $|x-y|\ge r>0$ one has
\begin{eqnarray*}
	\vol(S(x,y)\cap B_r(y)) & \ge & t\cdot \vol (B_r(y)),
\end{eqnarray*}
where $\vol({\cdot})$ stands for the Euclidean volume, $B_r(x)$ is the closed ball of radius $r$ around $x$, $|x-y|$ is the Euclidean 
distance between $x$ and $y$.
\end{definition}

This definition is very relaxed, the following more familiar shapes are examples of $t$-shapes for some $t$.

\example{
For every $a>0$, the ellipsoids
\[
E_a(x,y) = \{z \in \rr^n : |z-x| + |z-y| \le (1+a) |x-y| \}
\]
are $t$-shapes for some $t=t_1(n,a)$.
}

\example{
For every $\pi>a>0$, the shapes
\[
S_a(x,y) = \{z \in \rr^n : \angle(x-z,y-z) \ge a  \} 
\]
are $t$-shapes for some $t=t_2(n,a)$.  In particular, if $a = \pi / 2$, then $S_a(x,y)$ is simply the Euclidean ball with diameter $[x,y]$.  %We can be more precise and notice that $S_{\pi / 2}(x,y)$ is a $t$-shape for $t \ge a_1 \cdot c_1^n$, where $a_1$ and $c_1$ are some absolute positive constants not depending on the dimension.  This is a direct consequence of standard estimates for the volume of euclidean balls and their conical sections, see \cite{Ball:1997ud} for example.
For a general $a$ these sets are also called \textit{$a$-lenses}, and families of these objects have nice intersection properties~%
\cite{Barany:1987fi, Barany:1987ef}.
}

We give a positive answer to B\'ar\'any's problem where $S(x, y)$ are no longer required to be balls, but are $t$-shapes for some $t > 0$.
In this case the fraction of intersecting hulls we can guarantee
decreases exponentially in $n$ for every fixed $t$ .

\begin{theorem}[Positive fraction intersection for $t$-shapes]\label{theorem-positive-thin-shapes}
There exist positive absolute constants $c_1$ and $c_2$ satisfying the following property.
For every $t$-shape $S(x,y)$ in $\rr^n$, and for every finite $X$ with $|X| \ge N$ setting 
$$\lambda = c_1 \cdot t \cdot c_2^n$$
guarantees that there is a point $z \in \rr^n$ that is covered by at least $\lambda N^2$ shapes 
$S(x,y)$ with $x,y \in X$.
\end{theorem}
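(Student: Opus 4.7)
The plan is to combine a structural lemma that isolates an auxiliary center $p$ and scale $r$ with a probabilistic averaging argument. After producing $p,r$ such that many points of $X$ lie inside $B_r(p)$ and many others lie far outside $B_{2r}(p)$, I would choose $z$ uniformly at random in $B_{2r}(p)$; the $t$-shape condition near the ``inner'' endpoint $y$ then yields a fixed lower bound on $\Pr[z \in S(x,y)]$ for every pair from the good collection, and linearity of expectation closes the argument.

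For the structural step, let $r_* = \inf\{r > 0 : \max_{p \in \rr^n} |X \cap B_r(p)| \ge N/2\}$; since $X$ is finite this infimum is attained, so pick $p_*$ with $|X \cap B_{r_*}(p_*)| \ge N/2$. Cover $B_{r_*}(p_*)$ by at most $7^n$ balls of radius $r = r_*/3$ (via a standard grid covering). By pigeonhole some cover ball $B_r(p)$ satisfies $|X \cap B_r(p)| \ge N/(2\cdot 7^n)$. Since $2r = 2r_*/3 < r_*$, the defining property of $r_*$ forces $|X \cap B_{2r}(q)| < N/2$ for every $q$; in particular at least $N/2$ points of $X$ lie in $X' := X \setminus B_{2r}(p)$. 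Write $Y := X \cap B_r(p)$.

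For the averaging step, observe that for every pair $(x,y) \in X' \times Y$ one has $|x-y| \ge |x-p| - |y-p| \ge 2r - r = r$, so the $t$-shape condition at $y$ (applied at radius $r$) gives $\vol(S(x,y) \cap B_r(y)) \ge t\,\vol(B_r(y))$. Since $|y-p| \le r$, the ball $B_r(y)$ is contained in $B_{2r}(p)$. Choosing $z$ uniformly in $B_{2r}(p)$, we obtain
\[
\Pr_z[z \in S(x,y)] \;\ge\; \frac{\vol(S(x,y) \cap B_r(y))}{\vol(B_{2r}(p))} \;\ge\; \frac{t\,\vol(B_r)}{\vol(B_{2r})} \;=\; \frac{t}{2^n}.
\]
Since $|X' \times Y| \ge (N/2)\cdot N/(2\cdot 7^n) = N^2/(4\cdot 7^n)$, linearity of expectation yields $\mathbb{E}_z\bigl[\#\{(x,y)\in X\times X : z \in S(x,y)\}\bigr] \ge t\,N^2/(4\cdot 14^n)$, and some fixed $z \in B_{2r}(p)$ achieves this value. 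This proves the theorem with $c_1 = 1/4$ and $c_2 = 1/14$.

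The step I expect to be most delicate is reconciling the location of the $t$-shape density (guaranteed at the endpoint $y$, not at arbitrary points) with the region in which $z$ is averaged: without the inclusion $B_r(y) \subseteq B_{2r}(p)$ one could only compare $\vol(S(x,y) \cap B_{2r}(p))$ against an off-center reference ball, and the worst-case $t$-shape could place essentially all of its mass outside the averaging region, giving only a trivial bound. The structural lemma is engineered precisely to enforce this inclusion for every admissible $y$, at the price of the $7^n$ factor coming from the covering number; all other losses reduce to a clean $2^n$ geometric ratio between the two balls, preserving the linear dependence on $t$.
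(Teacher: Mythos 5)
Your argument is correct and yields the theorem with $c_1 = 1/4$, $c_2 = 1/14$ (for $N \ge 3$; if $N \le 2$ your $r_*$ could be $0$, but those cases are degenerate for the paper's proof as well). The volumetric averaging at the end is essentially identical to the paper's, but your structural step is genuinely different. The paper proves a lemma valid in an arbitrary finite metric space: there is a subset $Y \subset X$ admitting at least $N^2/64$ ordered pairs $(y,x)$ with $y \in Y$, $x \in X \setminus Y$ and $\rho(x,y) \ge \tfrac14 \diam Y$; its proof orders the points by the radius $a(x)$ at which $B_r(x)$ first captures $3N/4$ of $X$ and runs a case analysis on the independence number of an auxiliary graph at scale $a$. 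The averaging region is then $R = \bigcup_{y \in Y} B_{\diam Y}(y)$, contained in a ball of radius $2 \diam Y$. You instead take the minimal radius $r_*$ at which some ball captures half of $X$, extract a dense sub-ball of radius $r_*/3$ by a $7^n$-covering argument, and use minimality of $r_*$ to force half of $X$ outside the doubled sub-ball, so that the averaging region is a single ball and the inclusion $B_r(y) \subseteq B_{2r}(p)$ holds for every inner endpoint. Your route is shorter and more transparent, but the Euclidean covering number costs you a $7^n$ factor on top of the $2^n$ volume ratio, giving $c_2 = 1/14$ against the paper's $c_2 = 1/8$ (your $c_1 = 1/4$ beats their $1/64$, but the exponential base is what matters); the paper's lemma avoids any covering number, confines the use of Euclidean geometry to the single doubling estimate $\vol(R) \le (2d)^n \vol(B_1)$, and is arguably of independent interest as a purely metric statement.
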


In Theorem \ref{theorem-many-boxes-intersect} we also consider the shape $S(x, y)$ to be the minimal box (i.e. a parallelotope with edges parallel to coordinate lines)
containing $x$ and $y$. A box is not a $t$-shape for any $t > 0$, but a positive-fraction result can also be proved. In this case
the fraction also decreases exponentially in the dimension.

We should emphasise that a condition on the sets $S(x,y)$ is necessary, as considering $S(x,y)$ to be the segment $[x,y]$ fails to give a positive-fraction result.

One of the most striking applications of the first selection lemma is the proof of existence of weak $\varepsilon$-nets for convex sets, presented below.

\begin{theoremp}[Alon, B\'ar\'any, F\"uredi and Kleitman, 1992 \cite{Alon:2008ek}]\label{theorem-weak-epison}
Let $n$ be a positive integer, and $1 \ge \varepsilon > 0$.  Then, there is a positive integer $m = m(n,\varepsilon)$ such that the following holds.  For every finite set $S \subset \rr^n$, there is a set $\mathcal{T} \subset \rr^n$ of $m$ points such that if $A \subset S$ is a subset with size at least $\varepsilon |S|$, then
\[
\mathcal{T} \cap \operatorname{conv}(A) \neq \emptyset.
\]
Moreover, $m = O(\varepsilon^{-n-1})$ where the implied constant of the $O$ notation depends on $n$.
\end{theoremp}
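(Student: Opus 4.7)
The plan is to construct $\mathcal{T}$ by a greedy iteration driven by the First Selection Lemma. Set $\mathcal{T}_0 = \emptyset$ and, at stage $i$, let $\mathcal{B}_i$ denote the family of subsets $A\subset S$ with $|A|=\lceil\varepsilon|S|\rceil$ such that $\conv(A)\cap\mathcal{T}_i=\emptyset$; by monotonicity of convex hulls it suffices to kill all of $\mathcal{B}_i$, since any larger bad $A'$ contains a bad $A$ of this minimal size. Weight each $(n+1)$-element subset $\sigma\subset S$ by $w_i(\sigma)=|\{A\in\mathcal{B}_i:\sigma\subset A\}|$, and apply a weighted form of the First Selection Lemma---whose proof mirrors the unweighted one, with counting replaced by weight summation---to obtain a point $z_{i+1}$ such that the simplices containing $z_{i+1}$ carry total weight at least $c_n\sum_\sigma w_i(\sigma)$.

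A standard double count then drives the iteration forward. Whenever $z_{i+1}$ lies in some simplex $\conv(\sigma)\subset\conv(A)$ with $\sigma\subset A\in\mathcal{B}_i$, the subset $A$ becomes covered once $z_{i+1}$ is added to $\mathcal{T}$. Since each $A\in\mathcal{B}_i$ contains only $\binom{|A|}{n+1}$ simplices, the weighted conclusion translates into $|\mathcal{B}_{i+1}|\le(1-c_n)|\mathcal{B}_i|$, so each new point eliminates a fixed fraction of the surviving bad subsets.

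The main obstacle is bounding the total number of iterations independently of $|S|$. Since $|\mathcal{B}_0|\le\binom{|S|}{\lceil\varepsilon|S|\rceil}$, a direct termination estimate yields only $|\mathcal{T}|=O(|S|\log(1/\varepsilon))$, which is much too large. To achieve $m=O_n(\varepsilon^{-n-1})$ one must refine the scheme---for instance, exploiting the fact that points produced by the First Selection Lemma are ``deep'' (every halfspace avoiding $z_{i+1}$ contains at most $(1-\delta_n)|S|$ points of $S$) to partition the remaining bad subsets by their separating halfspace and recurse inside each part. Balancing the resulting recursion of depth $O_n(\log(1/\varepsilon))$ with bounded branching at each level should produce the claimed polynomial bound, and organizing this amortization so that the final count is $|S|$-independent is the technical heart of the proof.
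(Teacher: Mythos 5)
Your iteration has the right engine (the First Selection Lemma applied to a surviving bad set) but the wrong potential function, and the gap you flag at the end is precisely where the argument should not be hard. The standard proof --- the one this paper reproduces verbatim for its topological and $t$-shape variants --- does not track the number of bad sets $A$ at all. Instead it tracks $K$, the number of $(n+1)$-element subsets $\sigma \subset S$ with $\conv(\sigma) \cap \mathcal{T} = \emptyset$. If some $A$ with $|A| \ge \varepsilon|S|$ still satisfies $\conv(A) \cap \mathcal{T} = \emptyset$, then in particular every $(n+1)$-subset $\sigma \subset A$ satisfies $\conv(\sigma) \cap \mathcal{T} = \emptyset$, so all $\binom{\lceil\varepsilon|S|\rceil}{n+1}$ of them currently count toward $K$. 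Applying the ordinary, unweighted First Selection Lemma to $A$ alone produces a point $z$ lying in at least $c_n\binom{\lceil\varepsilon|S|\rceil}{n+1} \gtrsim c_n\varepsilon^{n+1}\binom{|S|}{n+1}$ of these simplices; adding $z$ to $\mathcal{T}$ therefore decreases $K$ by an \emph{additive} amount that is a fixed fraction $c_n\varepsilon^{n+1}$ of the initial value $K_0 \le \binom{|S|}{n+1}$. Hence the process halts after at most $(c_n\varepsilon^{n+1})^{-1}$ steps, which is the claimed $O(\varepsilon^{-n-1})$ bound, independent of $|S|$ --- with no weighted selection lemma, no multiplicative decay, and no recursion on halfspaces.

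By contrast, your bookkeeping on $|\mathcal{B}_i|$ yields only a multiplicative decrease, and since $|\mathcal{B}_0|$ can be exponential in $|S|$ the resulting iteration count is $|S|$-dependent, as you correctly observe; the proposed repair via halfspace partitioning and a depth-$O(\log(1/\varepsilon))$ recursion is left entirely unjustified and is not how the theorem is proved. There is also a secondary soundness issue: the ``weighted form of the First Selection Lemma'' with arbitrary weights on simplices is not the standard statement and does not follow by merely replacing counting with weight summation (the known second/weighted selection lemmas give strictly weaker exponents). The correct choice of potential makes both difficulties disappear.
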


The set $\mathcal{T}$ is called a weak $\varepsilon$-net of $S$.  Bounding the size of a weak $\varepsilon$-net for convex sets is a notorious problem.  The best improvement over the bound above is $m = O(\varepsilon^{-n}\cdot \operatorname{polylog}(\varepsilon^{-1}))$ \cite{Chazelle:1993ft, Matouaek:2003gd}.  The best lower bound is 
$m = \Omega (\varepsilon^{-1}\cdot \log^{n-1} (\varepsilon^{-1}))$ \cite{Bukh:2011vs}.

We explore variations of weak $\varepsilon$-nets for operators different from the convex hull.  For instance, in Theorem \ref{theorem-weakepsilon-topological} we show that the topological versions of the selection theorem imply directly a topological extension of weak $\varepsilon$-nets for convex sets, using the same arguments as \cite{Alon:2008ek}.  This generalises weak $\varepsilon$-nets just like the topological Tverberg theorem generalises Tverberg's theorem 
\cite{Barany:1981vh, Tverberg:1966tb}. We also consider variants of weak $\varepsilon$-nets for $t$-shapes.  Given a $t$-shape $S$, we can define the \textit{thin hull} of a set $A$ as
\[
\operatorname{thin}_S(A) = \bigcup \{S(x,y) : x,y \in A\}
\]

Since $t$-shapes admit a first selection lemma, this operator begs for the existence of weak $\varepsilon$-nets.

\begin{theorem}\label{theorem-thin-hull-epsilon-net}
	Let $n$ be a positive integer, $\varepsilon, t >0$, and $S$ a $t$-shape in $\rr^n$.  Then, there is a positive integer $m' = m'(t,\varepsilon, n)$ such that the following holds.  For any finite set $X \subset \rr^n$, there is a set $\mathcal{T} \subset \rr^n$ of size $m'$ such that if $A \subset X$ and $|A| \ge \varepsilon |X|$, then 
	\[
	T \cap \operatorname{thin}_S (A) \neq \emptyset.
	\]
	Moreover, $m' = O(\varepsilon^{-2})$, where the implied constant of the $O$ notation depends on $n$ and $t$.
\end{theorem}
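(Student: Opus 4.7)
The plan is to mimic the greedy construction of Alon, B\'ar\'any, F\"uredi and Kleitman used in the proof of Theorem~\ref{theorem-weak-epison}, substituting Theorem~\ref{theorem-positive-thin-shapes} for the first selection lemma. Write $N = |X|$ and $\lambda = c_1 t c_2^n$ for the constant furnished by Theorem~\ref{theorem-positive-thin-shapes}. Starting from $\mathcal{T}_0 = \emptyset$, at each step $i \geq 1$ I search for a subset $A_i \subseteq X$ with $|A_i| \geq \varepsilon N$ and $\operatorname{thin}_S(A_i) \cap \mathcal{T}_{i-1} = \emptyset$; if no such $A_i$ exists, I terminate and output $\mathcal{T} = \mathcal{T}_{i-1}$, which by construction has the desired weak net property. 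Otherwise, I apply Theorem~\ref{theorem-positive-thin-shapes} to $A_i$ to obtain a point $z_i$ belonging to at least $\lambda |A_i|^2 \geq \lambda \varepsilon^2 N^2$ shapes $S(x,y)$ with $x,y \in A_i$, append $z_i$ to form $\mathcal{T}_i$, and iterate.

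The crux is to show that this process terminates after $O(\varepsilon^{-2})$ steps by a double-counting argument on the ``pairs killed'' at each step. For each $i$, let
\[
P_i = \bigl\{\{x,y\} \in \tbinom{X}{2} : x,y \in A_i \text{ and } z_i \in S(x,y)\bigr\},
\]
so $|P_i| \geq \tfrac{\lambda}{2} \varepsilon^2 N^2$ (the factor $1/2$ accounting for unordered versus ordered pairs). I claim the $P_i$ are pairwise disjoint. Indeed, suppose $\{x,y\} \in P_i \cap P_j$ with $i < j$. Then both $x$ and $y$ lie in $A_j$, so $S(x,y) \subseteq \operatorname{thin}_S(A_j)$; but $z_i \in S(x,y)$ and $z_i \in \mathcal{T}_{j-1}$, contradicting the condition $\operatorname{thin}_S(A_j) \cap \mathcal{T}_{j-1} = \emptyset$ that was imposed when selecting $A_j$.

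Summing $|P_i|$ over the $k$ steps carried out before termination therefore yields
\[
k \cdot \tfrac{\lambda}{2} \varepsilon^2 N^2 \;\leq\; \sum_{i=1}^{k} |P_i| \;\leq\; \tbinom{N}{2} \;\leq\; \tfrac{N^2}{2},
\]
so that $k \leq 1/(\lambda \varepsilon^2) = O(\varepsilon^{-2})$ with the implied constant depending only on $n$ and $t$ through $\lambda$. (Trivial edge cases with $\varepsilon N < 2$, where no admissible $A$ exists, can be absorbed by taking $\mathcal{T} = X$.) The only non-routine step is the disjointness observation for the $P_i$; once that is in hand, the counting is immediate, and the improvement of the exponent from $\varepsilon^{-n-1}$ in the convex-hull case to $\varepsilon^{-2}$ here reflects the fact that thin hulls are generated by pairs of points rather than by $(n+1)$-tuples.
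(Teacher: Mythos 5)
Your proposal is correct and is essentially the paper's own argument: the same greedy construction driven by Theorem~\ref{theorem-positive-thin-shapes}, with your disjointness of the sets $P_i$ being exactly the paper's observation that each added point decreases the count of uncovered pairs by at least $\lambda\varepsilon^2\binom{|X|}{2}$, yielding the same bound $m' \le (\lambda\varepsilon^2)^{-1}$. No substantive differences.
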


The rest of the paper is organised as follows.  In Section \ref{section-balls} we give a solution B\'ar\'any's original problem.  In Section \ref{section-t-shapes} we prove our results on $t$-shapes.  In Section \ref{section-boxes} we state and prove our results for boxes.  Finally, in Section \ref{section-epsilonnets} we prove all our results regarding weak $\varepsilon$-nets.

\section{Positive-fraction result for Euclidean balls}\label{section-balls}

In this section we give a solution to the original B\'ar\'any's problem with a large constant.  We prove a stronger version of the result, in the same spirit as Karasev's colourful version of B\'ar\'any's result \cite{Karasev:2012bj}.  Namely, instead of having one finite set $X$, we are given two sets $A, B$.  We give a positive fraction intersection result for the balls having diameters with one end in $A$ and another in $B$.  The case $A=B$ is B\'ar\'any's problem.

\begin{theorem}\label{theorem-many-balls-intersect}
For each $x,y \in \rr^n$ let $S(x,y)$ be the Euclidean ball with diameter $[x,y]$.  Then, for finite sets $A, B \subset \rr^n$ of $N$ and $M$ points respectively, there is a point covered by at least $\frac{1}{n + 1}NM$ sets of the form $S(a,b)$ with $a \in A,b \in B$.
\end{theorem}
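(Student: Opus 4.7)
The plan is to pick $z$ to be a centerpoint of $A$ (guaranteed by Rado's centerpoint theorem) and then show, for each $b \in B$ separately, that at least $\frac{N}{n+1}$ choices of $a \in A$ satisfy $z \in S(a,b)$. Summing over $b \in B$ then gives the desired $\frac{NM}{n+1}$ pairs.

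The computation enabling this is the classical characterization of the ball on a diameter: expanding $|z - (a+b)/2|^2 \le |a-b|^2/4$ shows that $z \in S(a,b)$ is equivalent to $(a-z)\cdot(b-z) \le 0$. In other words, fixing $b$, the set of $a$'s for which $z \in S(a,b)$ is exactly
\[
H_b := \{x \in \rr^n : (x-z)\cdot(b-z) \le 0\},
\]
a closed halfspace whose bounding hyperplane passes through $z$ with outward normal $b-z$ (the case $b=z$ degenerates to $H_b = \rr^n$, which only helps).

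Now I would invoke the centerpoint theorem in the form: there exists $z \in \rr^n$ such that every closed halfspace containing $z$ meets $A$ in at least $\lceil N/(n+1)\rceil$ points. Since $z$ lies on the boundary of $H_b$, we have $z \in H_b$, and therefore $|H_b \cap A| \ge N/(n+1)$. Thus for every fixed $b \in B$, there are at least $N/(n+1)$ points $a \in A$ such that $z \in S(a,b)$.

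Summing over all $b \in B$ yields at least $\frac{NM}{n+1}$ pairs $(a,b) \in A \times B$ with $z \in S(a,b)$, as claimed. There is no real obstacle here: the argument amounts to observing that the ball-on-diameter condition $z \in S(a,b)$ is a linear (halfspace) condition on $a$ once $z$ and $b$ are fixed, which is precisely what Rado's centerpoint theorem is designed to exploit.
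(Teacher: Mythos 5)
Your proposal is correct and is essentially the paper's own argument: the paper also takes $z$ to be a Rado centerpoint of $A$ and, for each fixed $b$, identifies the set of good $a$'s as the closed half-space through $z$ on which the angle $\angle azb$ is non-acute (your dot-product condition $(a-z)\cdot(b-z)\le 0$ is the same characterization). No issues.
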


\begin{proof}
By Rado's central point theorem \cite{Rado:1946lms}, there is a point $z \in \mathbb R^n$ such that for every close half-space that contains $z$ also contains at least $\frac{N}{n + 1}$ points of the set $A$.  Consider a point $b \in B$. If $b = z$, then the ball $S(a, b)$ contains $z$ for any $a \in A$.

If $b \neq z$, then consider a hyperplane through $z$ orthogonal to the segment $[b, z]$. Let $H$ be the closed half-space of this hyperplane
that does not contain $b$. Then for any $a \in A \cap H$ the angle $\angle bza$ is not acute, and therefore $z \in S(a, b)$.
Hence every point $b \in B$ is contained in at least $\frac{N}{n + 1}$ pairs $(a, b)$ with $a \in A$ such that $z \in S(a, b)$.

This gives a total of at least $\frac{NM}{(n + 1)}$ ordered pairs $a \in A, b \in B$ such that $z \in S(a, b)$, as desired.

\end{proof}

%One can see that the constant $\frac{1}{n+1}NM$ is optimal.  For this, take $K$ to be the vertices of a regular $...$

\section{Positive-fraction results for \texorpdfstring{$t$}{t}-shapes}\label{section-t-shapes}

Before proving Theorem \ref{theorem-positive-thin-shapes}, notice that it can be naturally extended to measures in $\rr^n$ by usual approximation arguments.

\begin{theorem}\label{theorem-thin-shapes-probabilistic}
There exist positive absolute constants ${c_1}$ and $c_2$ satisfying the following property.
For every $t$-shape $S(x,y)$ in $\rr^n$ and for every Borel probability measure $\mu$ in $\rr^n$, setting 
$$\lambda = c_1 \cdot t \cdot c_2^n$$
guarantees that there is a point $z \in \rr^n$ such that
	 \[
	 \mathbb{P}(z \in S(x,y): x,y \ \mbox{are independent } \mu\mbox{-random points}) \ge \lambda.
	 \]
\end{theorem}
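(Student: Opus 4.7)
The plan is to deduce Theorem \ref{theorem-thin-shapes-probabilistic} from the discrete Theorem \ref{theorem-positive-thin-shapes} (whose proof occupies the rest of the section) by the standard empirical-approximation argument alluded to in the sentence preceding the statement.

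First, I would reduce to the case of compactly supported $\mu$: for any $\varepsilon>0$, choose $R$ with $\mu(B_R(0))\geq 1-\varepsilon$ and set $\mu'=\mu|_{B_R}/\mu(B_R)$. A random pair $(x,y)\sim\mu^2$ lies in $B_R\times B_R$ with probability at least $(1-\varepsilon)^2$, so any lower bound $\lambda$ for $\mu'$ yields $(1-\varepsilon)^2\lambda$ for $\mu$; letting $\varepsilon\to 0$ and absorbing the resulting factor into $c_1$, I may assume $\mu$ is supported in a fixed compact set $K$.

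Next, I would draw iid samples $Y_1,\ldots,Y_N$ from $\mu$ and apply Theorem \ref{theorem-positive-thin-shapes} to $X_N=\{Y_1,\ldots,Y_N\}$, obtaining a point $z_N\in\rr^n$ with
\[
\mathbb{P}_{(x,y)\sim\mu_N^2}\bigl(z_N\in S(x,y)\bigr)\geq\lambda,
\]
where $\mu_N$ denotes the empirical measure. Any such $z_N$ must lie in a bounded neighborhood of $K$, so by compactness I may extract a subsequence $z_{N_k}\to z^*$.

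The main obstacle is the limit passage. By Varadarajan's theorem the empirical measures $\mu_N$ converge weakly to $\mu$ almost surely, and for each fixed $z$ the law of large numbers gives $\mathbb{P}_{\mu_N^2}(z\in S(x,y))\to\mathbb{P}_{\mu^2}(z\in S(x,y))$; but since $z_{N_k}$ itself moves with $k$, pointwise convergence is insufficient and one needs a joint semicontinuity argument. The usual remedy is to replace $S(x,y)$ by an inner approximation $\widetilde S(x,y)\subseteq S(x,y)$ whose graph $\{(x,y,z):z\in\widetilde S(x,y)\}$ is open in $\rr^{3n}$ and whose complement inside $S(x,y)$ has arbitrarily small Lebesgue measure (so that the $t$-shape hypothesis survives with an essentially unchanged constant). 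For such $\widetilde S$ the indicator $\mathbf{1}[z\in\widetilde S(x,y)]$ is lower semicontinuous jointly in $(x,y,z)$, and the Portmanteau theorem applied along the subsequence yields $\mathbb{P}_{\mu^2}(z^*\in\widetilde S(x,y))\geq\lambda$, hence $\mathbb{P}_{\mu^2}(z^*\in S(x,y))\geq\lambda$, completing the deduction.
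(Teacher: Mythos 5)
The reduction to compactly supported $\mu$ and the sampling step are fine, but the limit passage contains a genuine gap: the inner approximation $\widetilde S$ you invoke does not exist for a general $t$-shape. Definition \ref{definition-t-shape} requires each $S(x,y)$ only to be a \emph{measurable} subset of $\rr^n$, with no regularity in $z$ and no continuity whatsoever in $(x,y)$. If the graph $\{(x,y,z):z\in\widetilde S(x,y)\}$ is open in $\rr^{3n}$, then each slice $\widetilde S(x,y)$ is an open subset of $S(x,y)$; but a measurable set of positive measure need not contain any nonempty open set (take $S(x,y)\cap B_r(y)$ to be a fat-Cantor-type set), in which case $\widetilde S(x,y)=\emptyset$ and the ``complement of small measure'' requirement fails outright. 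Even when each $S(x,y)$ is open, joint openness of the graph forces a lower-semicontinuity of $(x,y)\mapsto S(x,y)$ that the definition does not provide. Finally, ``small Lebesgue measure'' is not the right smallness: to preserve the $t$-shape property you need the discarded part of $S(x,y)\cap B_r(y)$ to be at most $\varepsilon\vol(B_r(y))$ \emph{for every} $r\le|x-y|$, which is much stronger than a small total defect. So the Portmanteau step, which is the heart of your deduction, does not go through for the class of shapes the theorem actually covers.

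The robust route is not to funnel the measure version through the discrete statement at all, but to run the paper's proof of Theorem \ref{theorem-positive-thin-shapes} directly at the level of $\mu$: one needs a measure-theoretic analogue of Lemma \ref{lemma-diameters} producing a Borel set $Y$ with $(\mu\times\mu)\bigl(\{(y,x): y\in Y,\ x\notin Y,\ |x-y|\ge\tfrac14\diam Y\}\bigr)\ge\tfrac1{64}$, after which Fubini on $R=\bigcup_{y\in Y}B_d(y)$ gives
\[
\int_R \mathbb{P}_{\mu^2}\bigl(z\in S(x,y)\bigr)\,dz \;=\; \mathbb{E}_{\mu^2}\bigl[\vol(S(x,y)\cap R)\bigr]\;\ge\;\tfrac1{64}\cdot t\cdot\bigl(\tfrac{d}{4}\bigr)^n U_n,
\]
and averaging over $R$ (with $\vol(R)\le(2d)^nU_n$) yields the point $z$. (Both your argument and this one tacitly require joint measurability of $\{(x,y,z):z\in S(x,y)\}$, a hypothesis the paper leaves implicit; it is needed even to make sense of the probability in the statement.) If you insist on the sampling approach, you must either impose closedness/upper-semicontinuity hypotheses on $S$ or explain why the behaviour of $S$ off the sample is controlled; as written, the discrete theorem applied to a sample gives no information about $\mathbb{P}_{\mu^2}$ at the extracted limit point.
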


We now prove Theorem~\ref{theorem-positive-thin-shapes} (and thus Theorem~\ref{theorem-thin-shapes-probabilistic} as well).  The first ingredient we need is the following lemma.

\begin{lemma}\label{lemma-diameters}
Let $(X, \rho)$ be a finite metric spaces with $|X| = N > 1$.  Then, there is a subset $Y \subset X$ such that one can find at least 
$N^2/64$ ordered pairs $(y,x)$ with $y \in Y$, $x \in X \setminus Y$ and $\rho(x,y) \ge \frac1{4} \diam{Y}$.
\end{lemma}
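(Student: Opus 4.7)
The plan is to take $Y$ to be a metric ball intersected with $X$: $Y = B_r(p) \cap X$, where $p \in X$ is a well-chosen centre and $r > 0$ an appropriate radius. The triangle inequality gives $\diam(Y) \le 2r$, so for every $x \in X$ with $\rho(x,p) \ge 2r$ and every $y \in Y$ we have
\[
\rho(x,y) \;\ge\; \rho(x,p) - \rho(p,y) \;\ge\; 2r - r \;=\; r \;\ge\; \diam(Y)/2 \;\ge\; \diam(Y)/4.
\]
Writing $n(r) = |B_r(p) \cap X|$, the number of good pairs for $Y = B_r(p) \cap X$ is therefore at least $n(r)\bigl(N - n(2r)\bigr)$.

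This reduces the lemma to finding a pair $(p,r)$ such that $n(r) \ge N/8$ and $n(2r) \le 7N/8$, because then $n(r)\bigl(N - n(2r)\bigr) \ge (N/8)(N/8) = N^2/64$. The first half is easy: for any fixed $p$ I would let $r$ be the $\lceil N/8 \rceil$-th smallest value in the multiset $\{\rho(p,x) : x \in X\}$, giving $n(r) \ge N/8$ automatically. If additionally $n(2r) \le 7N/8$, the proof is complete with this choice.

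The main difficulty is the opposite case, in which $n(2r) > 7N/8$. In that regime more than $3N/4$ of the points of $X$ lie in the spherical shell $B_{2r}(p) \setminus B_r(p)$, which has diameter at most $4r$. My plan here is to re-centre at a point $p' \in X$ drawn from the shell and rerun the previous argument with the distance function $s \mapsto |B_s(p') \cap X|$. The expectation is that the shell cannot concentrate in the same pathological way around every one of its own points, so after finitely many re-centrings one lands on a centre at which the balance $n(\cdot) \ge N/8$ and $n(2\cdot) \le 7N/8$ holds.

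Formalising the re-centring is the step I expect to be the main obstacle. A clean argument will most likely proceed by induction on $N$, using the fact that the diameter $4r$ of the concentrated shell gives an a priori upper bound on the scales at which any subsequent shell can concentrate, together with a combinatorial pigeonhole ensuring that the iteration terminates without degrading the constant $1/64$ in the final bound.
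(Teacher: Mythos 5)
There is a genuine gap, and it is exactly where you predicted: the re-centring step cannot be formalised, because the intermediate claim you are trying to establish is false. You want a point $p$ and radius $r$ with $n(r)=|B_r(p)\cap X|\ge N/8$ and $n(2r)\le 7N/8$. Take $X$ to be the uniform metric space on $N$ points, with $\rho(x,y)=1$ for all $x\neq y$. Then for every centre $p$ one has $n(r)=1$ for $r<1$ and $n(r)=N$ for $r\ge 1$, so $n(r)\ge N/8$ forces $r\ge 1$ and hence $n(2r)=N>7N/8$. No choice of $(p,r)$ works, and re-centring is hopeless since every point of $X$ is equivalent under isometry. The lemma itself is of course true for this space (with $Y$ half of $X$, every cross pair has $\rho(x,y)=1\ge\tfrac14\diam Y$), which shows the loss is in your counting: you only credit pairs $(y,x)$ with $x$ outside $B_{2r}(p)$, discarding the pairs where $x$ is at moderate distance from the centre but still at distance $\ge\tfrac14\diam Y$ from $y$. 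Any argument that insists on $x$ being far from the \emph{centre} of $Y$, rather than merely far from $y$ relative to $\diam Y$, will fail on such concentrated examples.

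The paper's proof handles precisely this concentration phenomenon with a different mechanism. For each $x$ it considers the radius $a(x)$ at which $B_{a(x)}(x)$ first captures about $\tfrac34 N$ points, takes $Y$ to be the $\lfloor 3N/4\rfloor$ points with smallest such radius (so $\diam Y\le 2a$ where $a$ is the threshold value), and then runs a dichotomy on the independence number of the graph on $V=X\setminus Y$ whose edges join points at distance $\ge a$. If the independence number is small, every $y\in Y$ must have at least $N/8$ points of $V$ at distance $\ge a/2\ge\tfrac14\diam Y$ (otherwise too many points of $V$ would sit in a ball of diameter $<a$, forming too large an independent set); if it is large, each vertex of a large independent set is at distance $\ge a$ from at least $N/8$ points of $Y$. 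The uniform space falls into the first case. The factor $\tfrac14$ in the statement (rather than $\tfrac12$) is exactly the slack that makes this dichotomy work; your approach, by contrast, never uses that slack.
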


\begin{proof}
Let $B_r(x)$ be the closed $\rho$-ball in $X$ of radius $r$ centered at $x$. For each $x \in X$ let
\[
a(x) = \max\left\{r \in \rr_+ : | B_r(x)| \le \frac{3N}4\right\}.
\]
	We can enumerate the points of $X$ as $x_1, x_2, \ldots, x_N$ so that
\[
a(x_1) \le a(x_2) \le \ldots \le a(x_N).	
\]

We will prove that the set $Y = \left\{x_i : i \le \left\lfloor \frac{3N}4 \right\rfloor\right\}$ satisfies the conclusion of the lemma.

Set $a = a (x_{ \left\lfloor {3N}/4 \right\rfloor})$.  First we show that $\diam{Y} \le 2a$.  Indeed, for every two points $y_1, y_2 \in Y$ the sets 
$$B_{a(y_1)}(y_1) \cap X \quad \text{and} \quad B_{a(y_2)}(y_2) \cap X$$
are each of cardinality at least $\frac{3N}4$ and thus have a point of intersection $z$.  Then
\[
\rho (y_1, y_2) \le \rho (y_1,z) + \rho (z,y_2) \le a(y_1) + a(y_2) \le 2a.
\]
Now consider the graph $G=(V,E)$ with vertex set $V=X \setminus Y$ such that two points $v_1, v_2 \in V$ are connected if and only if $\rho( v_1, v_2) \geq a$.  Let $\alpha (G)$ be the independence number of $G$. 
We consider two cases that cover all possibilities.

\textbf{Case 1.} $\alpha(G) \le \frac{N}8$.  We will show that for every $y \in Y$ there are at least 
$N/8$ pairs $(y,v)$ such that $v \in V$ and $\rho(y,v) \geq a/2$.  

Assume the opposite: for some $y \in Y$ there are fewer than $N/8$ pairs $(y,v)$ with $v \in V$ such that $\rho(y,v) \geq a/2$.
From the assumption it follows that for more than $|V| - N/8$ points $v\in V$ the inequality $\rho(y,v) < a/2$ holds. 
We can choose $\varepsilon > 0$ to be small enough to satisfy the following condition: for every $v \in V$ one has 
$\rho(y,v) \leq a/2 - \varepsilon$ whenever $\rho(y,v) < a/2$. Hence $|V \cap B_{a/2 - \varepsilon}(y)| > |V| - N/8$.

On the other hand,
\[
|V| = N - |Y| = N- \left\lfloor \frac{3N}{4}\right\rfloor \ge \frac{N}4.
\]
Hence $|V \cap B_{a/2 - \varepsilon}(y)| > \frac{N}4 - \frac{N}8 = \frac{N}8$. By definition of Case 1, the set
$V \cap B_{a/2 - \varepsilon}(y)$ is too large to be independent in $G$.
But $\diam B_{a/2 - \varepsilon}(y) < a$, and therefore
the set $V \cap B_{a/2 - \varepsilon}(y)$ has to be independent in $G$, a contradiction.

So, there are at least $\frac{N}8 \cdot \left\lfloor \frac{3N}{4} \right\rfloor$ pairs $(y,v)$ such that $y \in Y, v\in V$ and $\rho (y,v) > a/2$, which is sufficient for the lemma.

\textbf{Case 2.} $\alpha(G) > \frac{N}8$.
Let $W \subset V$ be an independent set in $G$ with $|W| > \frac{N}8$. 

First notice that, by the choice of $Y$ and $V$, each point $v \in V$ satisfies
\[
|X \setminus B_{a-\epsilon}(v)| \ge \lceil N/4 \rceil
\]
for every $\varepsilon \in (0, a)$.

Fix a point $w \in W$. Choose $\varepsilon \in (0, a)$ to be small enough to satisfy the following condition: for every $x \in X$ one has 
$\rho(w, x) \leq a - \varepsilon$ whenever $\rho(w, x) < a$.

For every $w' \in W$ ($w' \neq w$), the vertices $w$ and $w'$ are not connected by an edge of $G$. Thus $\rho(w, w') < a$, and,
consequently,  $\rho(w, w') \leq a - \varepsilon$. Hence $W \subset B_{a - \varepsilon}(w)$.

Therefore 
$$X \setminus B_{a - \varepsilon}(w) = (V\setminus B_{a - \varepsilon}(w)) \cup (Y \setminus B_{a - \varepsilon}(w)) \subset
(V\setminus W) \cup (Y \setminus B_{a - \varepsilon}(w)).$$

As a result, we have
$$|X \setminus B_{a - \varepsilon}(w)| \leq |Y \setminus B_{a - \varepsilon}(w)| + |V\setminus W| =
|Y \setminus B_{a - \varepsilon}(w)| + |V| - |W|,$$
and so
$$|Y \setminus B_{a - \varepsilon}(w)| \geq |X \setminus B_{a - \varepsilon}(w)| + |W| - |V| \geq 
\lceil N/4 \rceil + N/8 - \lceil N/4 \rceil = N/8.$$

Thus, every $w \in W$ produces at least $\frac{N}{8}$ pairs $(y,w)$ with $y \in Y$ and $\rho (y,w) > a - \varepsilon$.
By the choice of $\varepsilon$, all such $y$ satisfy $\rho (y,w) \geq a$ as well. Iterating over all $w \in W$, we get
a total of at least $\frac{N}{8} \cdot \frac{N}{8} = \frac{N^2}{64}$ pairs $(y, x)$ (where $x = w$) for the lemma.

\end{proof}

We are now ready to prove Theorem~\ref{theorem-positive-thin-shapes}.

\begin{proof}[Proof of Theorem~\ref{theorem-positive-thin-shapes}]
Let $U_n$ be the volume of the unit ball in $\rr^n$.

For the set $X \subset \rr^n$, choose a subset $Y$ which satisfies the conditions of Lemma \ref{lemma-diameters}.  Let $d = \diam{Y}$.

Define
$$R = \bigcup_{y \in Y} B_d(y).$$

Note that $Y \subset B_d(y)$ for any $y \in Y$. As a consequence, $R \subset B_{2d}(y)$. Thus, $\vol (R) \leq (2d)^n U_n$.

On the other hand, for all pairs $(y,x)$ with $y \in Y$, $x \in X \setminus Y$ and $|x-y| > \frac{d}{4}$ one has
\[
\vol{(S(x,y) \cap R)} \ge t \cdot \frac{d^n}{4^n} \cdot U_n.
\]

Since the number of such pairs is at least $\frac{N^2}{64}$, there is a point in $R$ that is covered by at least
\[
\frac{t*(d^n/4^n)*U_n}{\vol (R)}\cdot \frac{N^2}{64} \ge \frac{t \cdot N^2}{2^n \cdot 4^{n+3}}
\]
sets of the form $S(x,y)$. Hence Theorem~\ref{theorem-positive-thin-shapes} is proved with $c_1 = 1/64$, $c_2 = 1/8$.

\end{proof}

\section{Positive-fraction result for boxes}\label{section-boxes}

A {\it box} is a (closed) parallelotope with all edges parallel to coordinate axes. For arbitrary $t > 0$ a box is
not a $t$-shape, because it can be arbitrarily flat along any coordinate plane. Nevertheless, we prove a
positive-fraction result for boxes.

\begin{theorem}\label{theorem-many-boxes-intersect}
For each $x,y \in \rr^n$ let $S(x,y)$ be the minimal box that contains $x$ and $y$. Then,
\begin{itemize}
	\item for any finite set $X \subset \rr^n$ of $N$ 
points, there is a point covered by at least $\frac{N}{2^n}\left( \frac{N}{2^n} - 1 \right)$ sets of the form $S(x,y)$ with $x,y \in X$, and
	\item for each $\varepsilon > 0$, there is a finite set $X \subset \rr^n$ of $N$ points such that no point is contained in more than $\left(\frac1{2^n}+\varepsilon\right)N^2$ sets of the form $S(x,y)$ with $x,y \in X$.
\end{itemize}
\end{theorem}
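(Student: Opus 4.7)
For the first bullet (lower bound), my plan is an iterative coordinate-wise median-splitting procedure that produces two large subsets of $X$ lying on opposite sides of a common point $z$ in every coordinate. Start from $A_0 = B_0 = X$; at step $i \in \{1, \ldots, n\}$, compute the medians $m_A$ and $m_B$ of coordinate $i$ in $A_{i-1}$ and $B_{i-1}$, swap the labels of the two subsets if necessary so that $m_A \le m_B$, pick any $z_i \in [m_A, m_B]$, and set $A_i = A_{i-1} \cap \{x : x_i \le z_i\}$ and $B_i = B_{i-1} \cap \{x : x_i \ge z_i\}$. Since $z_i$ lies on the correct side of each median, $|A_i| \ge \lceil |A_{i-1}|/2 \rceil$ and similarly for $B_i$, so after $n$ rounds $|A_n|, |B_n| \ge \lceil N/2^n \rceil$. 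By construction, for every $a \in A_n$, $b \in B_n$ and every coordinate $i$, the value $z_i$ lies between $a_i$ and $b_i$, so $z = (z_1, \ldots, z_n) \in S(a, b)$. Choosing each $z_i$ strictly between the two medians makes $A_n$ and $B_n$ disjoint, so the number of unordered pairs $\{x, y\} \subset X$ with $z \in S(x, y)$ is at least $|A_n|\cdot|B_n| \ge \lceil N/2^n \rceil^2$, which an easy arithmetic check shows is at least $\frac{N}{2^n}(\frac{N}{2^n} - 1)$.

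For the second bullet (upper bound), my plan is a probabilistic construction: take $X$ to be $N$ independent uniform random points in the unit cube $[0,1]^n$. For any fixed $z \in [0,1]^n$ and independent uniform random $x, y \in X$, independence across coordinates gives
\[
\Pr[z \in S(x, y)] = \prod_{i=1}^n 2 z_i(1 - z_i) \le 2^{-n},
\]
with equality only at the center of the cube, so the expected number of unordered pairs containing any fixed $z$ is at most $\binom{N}{2}/2^n$. The counting function $N(z)$ is piecewise constant on at most $(N+1)^n$ cells of the axis-parallel hyperplane arrangement determined by the coordinates of the points of $X$, so a McDiarmid-style concentration bound applied to the U-statistic $N(z)$ together with a union bound over these cells gives $\sup_z N(z) \le (\frac{1}{2^n} + \varepsilon) N^2$ with positive probability for $N$ sufficiently large, which produces the desired configuration.

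The main obstacle in the lower bound is the swap step: if one tries to keep $A$ on (say) the ``lower'' side at every stage, then the medians of the two candidate sets in the next coordinate may be in the wrong order, and no single value of $z_i$ can separate them. Allowing the two sets to exchange roles in each round resolves this and is the crux of the argument. For the upper bound the technical point is the uniform concentration across the $(N+1)^n$ cells, which succeeds because the typical deviation of $N(z)$ is of order $N^{3/2}$ up to logarithmic factors, far below the gap between $\binom{N}{2}/2^n$ and the target $(\frac{1}{2^n} + \varepsilon)N^2$.
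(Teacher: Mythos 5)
Your proof is correct and follows essentially the same route as the paper: the lower bound is the paper's Lemma on iterated coordinate-wise median splitting (the paper's choice $z_j = \frac{z_j^{(1)}+z_j^{(2)}}{2}$ together with the sign $\varepsilon_j$ is exactly your swap step, and the possible single common point $z$ of the two orthants accounts for the $-1$ in the stated bound), while the upper bound uses the same extremal example, the uniform distribution on $[0,1]^n$, with the same per-pair estimate $\prod_{i=1}^n 2z_i(1-z_i)\le 2^{-n}$. The only difference is that you carry out explicitly, via a concentration inequality and a union bound over the cells of the coordinate arrangement, the discretization step that the paper delegates to ``standard approximation arguments.''
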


For a point $x \in \rr^n$, we denote by $(x_1, x_2, \ldots, x_n)$ its coordinates.  We prove Theorem~\ref{theorem-many-boxes-intersect} via the following lemma.

\begin{lemma}\label{lemma-boxes}
Let $X\subset \rr^n$ be a set of $N$ points, $j \in \{ 1, 2, \ldots, n \}$. Then there exist two sequences
\[ z_1, z_2, \ldots, z_j \; (z_i \in \rr) \quad \text{and} \quad
\varepsilon_1, \varepsilon_2, \ldots, \varepsilon_j \; (\varepsilon_i \in \{ -1, +1 \})\]
such that the two sets
\begin{eqnarray*}
R_1 & = & \{ x \in \rr^n : \varepsilon_i(x_i - z_i) \leq 0 \; \text{for} \; i = 1, 2, \ldots, j \}, \\
R_2 & = & \{ x \in \rr^n : \varepsilon_i(x_i - z_i) \geq 0 \; \text{for} \; i = 1, 2, \ldots, j \}
\end{eqnarray*}
satisfy $|X \cap R_k| \geq \frac{|X|}{2^j}$ for $k = 1,2$. 
\end{lemma}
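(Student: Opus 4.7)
The plan is to prove the lemma by induction on $j$, with each step refining the two sets $R_1, R_2$ by one additional coordinate cut that shrinks each of $|X \cap R_1|, |X \cap R_2|$ by at most a factor of two. The cuts are chosen using one-dimensional medians of the surviving point sets.

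For the base case $j = 1$, let $z_1$ be a median of the multiset $\{x_1 : x \in X\}$, i.e.\ a real for which both $\{x \in X : x_1 \le z_1\}$ and $\{x \in X : x_1 \ge z_1\}$ have cardinality at least $N/2$; such a value exists for any finite multiset of reals. Setting $\varepsilon_1 = +1$ then produces the desired $R_1, R_2$.

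For the inductive step, suppose $z_1, \ldots, z_{j-1}$ and $\varepsilon_1, \ldots, \varepsilon_{j-1}$ already give sets $R_1', R_2'$ with $|X \cap R_k'| \ge N/2^{j-1}$ for $k=1,2$. Write $A = X \cap R_1'$ and $B = X \cap R_2'$, and let $m_A, m_B$ be medians of $\{a_j : a \in A\}$ and $\{b_j : b \in B\}$ respectively. If $m_A \le m_B$, I would set $\varepsilon_j = +1$ and $z_j = m_A$: then the new $R_1$ captures $\{a \in A : a_j \le m_A\}$, of size $\ge |A|/2 \ge N/2^j$, while the new $R_2$ captures $\{b \in B : b_j \ge m_A\} \supseteq \{b \in B : b_j \ge m_B\}$, also of size $\ge |B|/2 \ge N/2^j$. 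The opposite case $m_A > m_B$ is handled symmetrically by $\varepsilon_j = -1$, $z_j = m_B$.

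I do not foresee a serious obstacle. The only point requiring a bit of care is matching the sign $\varepsilon_j$ at each step with the relative order of the two coordinate-medians on the currently surviving sets $A$ and $B$, so that the new cut pares each of them down by at most half; and it is worth noting that $A$ and $B$ are permitted to overlap, which is harmless because the conclusion only asks for a lower bound on each of $|X \cap R_1|, |X \cap R_2|$ separately rather than for disjointness.
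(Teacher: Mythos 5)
Your proof is correct and follows essentially the same route as the paper's: induction on $j$, cutting by a median in the base case, and in the inductive step taking the two coordinate-$j$ medians of the surviving sets and placing the cut (with the matching sign $\varepsilon_j$) so that each set loses at most half. The only cosmetic difference is that the paper puts $z_j$ at the midpoint of the two medians while you use one of the medians itself; both choices work.
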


\begin{proof}
We use induction over $j$.  Let $j = 1$ and choose $z_1$ so that the plane $x_1 = z_1$ splits the set $X$ into almost equal parts (this, by definition,
means that the intersection of $X$ with each of two closed subspaces $x_1 \leq z_1$ and $x_1 \geq z_1$ contains at least $|X| / 2$
points). It is clear that $\varepsilon_1 = +1$ will suffice.

Let $j > 1$. By induction hypothesis, we can find $z_1, z_2, \ldots, z_{j - 1}$ and $\varepsilon_1, \varepsilon_2, \ldots, \varepsilon_{j - 1}$
that satisfy the statement for $j - 1$. Set
$$X_1^{(j - 1)} = X \cap \{ x \in \rr^n : \varepsilon_i(x_i - z_i) \leq 0 \; \text{for} \; i = 1, 2, \ldots, j - 1 \},$$
$$X_2^{(j - 1)} = X \cap \{ x \in \rr^n : \varepsilon_i(x_i - z_i) \geq 0 \; \text{for} \; i = 1, 2, \ldots, j - 1 \}.$$
The inductive assumption implies that
$$|X_k^{(j - 1)}| \geq \frac{|X|}{2^{j - 1}} \quad \text{for $k = 1, 2$}.$$

For $k = 1, 2$ choose $z_j^{(k)}$ so that the plane $x_j = z_j^{(k)}$ splits the set $X_k^{(j - 1)}$ into almost equal parts.
Now set $z_j = \frac{z_j^{(1)} + z_j^{(2)}}{2}$, and $\varepsilon_j = +1$ if $z_j^{(1)} < z_j^{(2)}$, or 
$\varepsilon_j = -1$ otherwise. This is sufficient to complete the induction step.

\end{proof}

\begin{proof}[Proof of Theorem~\ref{theorem-many-boxes-intersect}]
Apply Lemma~\ref{lemma-boxes} to the set $X$ with $j = n$. Set $z = (z_1, z_2, \ldots, z_n)$, $X_1 = X \cap R_1$, 
$X_2 = (X \cap R_2) \setminus \{ z \}$. For every $x_1 \in X_1$ and every $x_2 \in X_2$ the box $S(x_1, x_2)$ contains $z$.

Further, $|X_1| \geq \frac{N}{2^n}$, $|X_2| \geq \frac{N}{2^n} - 1$ and $X_1 \cap X_2 = \emptyset$, which gives the necessary
number of pairs.

For the lower bound, notice that by standard approximation arguments (see, for instance, \cite{Mat03}) it suffices to find a probability measure $\mu$ in $\rr^n$ which is absolutely continuous with respect to the Lebesgue measure such that for all $z \in \rr^n$, the probability that $z \in S(x,y)$ is at most $\frac{1}{2^n}$ where $x,y$ are independent $\mu$-random points.  Let $\mu$ be the uniform probability measure on the unit cube $C=[0,1]^n$.  

%The following sequence of discrete sets would fit our purposes to approximate the uniform measure in the unit cube $C$. Take a cubic lattice generated by vectors $(1/n) \cdot e_1$, $(1/n) \cdot e_2$, $\ldots$, $(1/n) \cdot e_n$ and choose all points of this lattice that are in $C$. Then by a small perturbation make this set be in general position.

Given $z=(z_1, z_2, \ldots, z_n)$ and $x,y \in \rr^n$, notice that $z \in S(x,y)$ if and only if there is a sequence $a=(\varepsilon_1, \varepsilon_2, \ldots, \varepsilon_n)\in \{+1,-1\}^n$ such that
\begin{eqnarray*}
	x \in R^{a}_1 & = & \{ u \in \rr^n : \varepsilon_i(u_i - z_i) \leq 0 \; \text{for} \; i = 1, 2, \ldots, n \}, \\
	y \in R^{a}_2 & = & \{ u \in \rr^n : \varepsilon_i(u_i - z_i) \geq 0 \; \text{for} \; i = 1, 2, \ldots, n \}
\end{eqnarray*}

Thus
\[
\mathbb{P}[z \in S(x,y): x,y \ \mbox{i.i.d.}] = \sum_{a \in \{+1,-1\}^n} \vol(R^{a}_1 \cap C) \cdot \vol (R^a_2 \cap C)
\]

Also, if $z \in S(x,y)$ for any $x,y$, it is necessary that $0 \le z_i \le 1$.  In order to bound $\vol(R^{a}_1 \cap C) \cdot \vol (R^a_2 \cap C)$, we may assume without loss of generality that $a = (+1,+1,\ldots, +1)$.  Then
\begin{eqnarray*}
	\vol(R^{a}_1 \cap C) \cdot \vol (R^a_2 \cap C)&  = & \left( \prod_{i=1}^n z_i \right) \left( \prod_{i=1}^n (1-z_i)\right) \\
	& = & \prod_{i=1}^n z_i (1-z_i) \le \prod_{i=1}^n \frac14 = \frac1{4^n}
\end{eqnarray*}

which gives us the desired conclusion.
\[
P[z \in S(x,y):  x,y \ \mbox{i.i.d.}] = \sum_{a \in \{+1,-1\}^n} \vol(R^{a}_1 \cap C) \cdot \vol (R^a_2 \cap C) \le \frac1{2^n}
\]

\end{proof}

\section{Results for weak $\varepsilon$-nets}\label{section-epsilonnets}

In this section we show that the notion of weak $\varepsilon$-nets can be extended to operators other than the convex hull.  The arguments we use are based on the original proof 
in \cite{Alon:2008ek}.

The first topological extension of the first selection lemma in arbitrary dimension, stated below, was obtain by Gromov \cite{Gromov:2010eb}, which extends to continuous maps.

\begin{theoremp}
Let $\Delta^{N-1}$ be the $(N-1)$-dimensional simplex and $f:\Delta^{N-1} \to \rr^n$ a continuous map.  There is a constant $c_{n}^*$ depending only on $n$ such that we can always find a family $\ff$ of $n$-dimensional faces of $\Delta^{N-1}$ such that the images of $\ff$ intersect and
\[
|\ff| \ge c_n^* \binom{N}{n+1}
\]
Moreover,
\[
c_n^* \ge \frac{2n}{(n+1)(n+1)!}
\]	
\end{theoremp}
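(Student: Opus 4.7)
The plan is to follow Gromov's topological approach via an iterative filling argument on the skeleta of the simplex. First I would perturb $f$ so that it is piecewise linear and in general position with respect to every face of $\Delta^{N-1}$; after such a perturbation, for a generic point $y \in \rr^n$ the preimage $f^{-1}(y)$ is an $(N-1-n)$-dimensional pseudomanifold meeting the $n$-skeleton transversally in a finite set. For such $y$, the quantity $\nu(y) := \#\{\sigma : \sigma \text{ is an $n$-face of } \Delta^{N-1} \text{ with } y \in f(\sigma)\}$ is a well-defined nonnegative integer, and the goal is to produce one $y$ with $\nu(y) \geq c_n^* \binom{N}{n+1}$.

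The core of the argument is an inductive filling procedure on the skeleta of $\Delta^{N-1}$. Fix a test point $y_0$ deep inside $f(\Delta^{N-1})$. I would construct a sequence of weighted chains $c_0, c_1, \ldots, c_n$ with $c_k$ supported on $k$-faces of $\Delta^{N-1}$, together with the invariant that $f(\operatorname{supp} c_k)$ covers $y_0$ with multiplicity proportional to the mass of $c_k$. At each inductive step $k \to k+1$, the cycle associated to $c_k$ is filled inside the $(k+1)$-skeleton using a symmetrized cone construction averaged over all admissible apex vertices, and the resulting filling mass on each $(k+1)$-face is bounded by a factor of roughly $\frac{1}{N-k}$ times the total mass of $c_k$.

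Iterating the filling estimate from $k=0$ up to $k=n$ produces a chain $c_n$ whose support size is at least $c_n^* \binom{N}{n+1}$ with $c_n^* = \frac{2n}{(n+1)(n+1)!}$; the extra factor $2n$ in the numerator comes from the averaging over apex choices, which replaces a single cone-filling with a symmetric one. Since the support of $c_n$ consists of $n$-faces whose images all pass through $y_0$, taking $\ff$ to be this support yields the required family of $n$-faces with a common image point.

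The main obstacle is making the filling estimate both multiplicative in the right way and tight enough to give the stated constant: a single-vertex cone filling at each level loses a factor of order $n$ per step and produces a weaker bound of order $\frac{1}{(n+1)!^2}$. Recovering the factor $2n$ requires the symmetrization over apex vertices together with careful bookkeeping of how multiplicity distributes across the $(k+1)$-faces, and implementing this with rational weights (so that the averaging is well-defined) is the delicate technical point.
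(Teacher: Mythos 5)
First, note that the paper does not prove this statement at all: it is quoted as Gromov's theorem and used as a black box (the paper points to Gromov's original argument and to Karasev's simplification for proofs). So the only question is whether your sketch stands on its own, and as written it does not. The architecture you describe --- PL perturbation into general position, an inductive filling of chains up the skeleta of $\Delta^{N-1}$, and a symmetrized cone construction averaged over apex vertices --- is indeed the shape of the Gromov/Karasev argument. But the entire mathematical content of the theorem lives in the one step you defer: the quantitative per-step filling (dually, cofilling/coboundary-expansion) inequality for the complete simplicial complex, with the correct normalization of norms on $k$-chains by $\binom{N}{k+1}$, together with the bookkeeping that converts chain mass into covering multiplicity at a point. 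You assert that the per-step loss is ``roughly $\frac{1}{N-k}$'' and that symmetrization recovers the factor $2n$, and then you explicitly flag this as ``the main obstacle'' and ``the delicate technical point.'' That is an acknowledgment that the proof is not there: without a precise statement and proof of that inequality, the constant $\frac{2n}{(n+1)(n+1)!}$ cannot be derived, and even the existence of some positive $c_n^*$ does not follow from what you have written.

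Two further points need attention even at the level of the plan. The choice of $y_0$ ``deep inside $f(\Delta^{N-1})$'' is not a hypothesis from which anything can be deduced; in the actual arguments $y_0$ is produced by the inequality itself (e.g.\ as the point maximizing $\nu$, or via a maximality argument), not fixed in advance by a vague geometric condition. Second, your invariant that ``$f(\operatorname{supp} c_k)$ covers $y_0$ with multiplicity proportional to the mass of $c_k$'' conflates the algebraic multiplicity of a chain (with signs, or mod $2$) with the geometric count $\nu(y_0)$ of distinct $n$-faces whose images contain $y_0$; the passage from one to the other is exactly where the normalized (co)chain norms do their work and must be made explicit. In short: the skeleton of the argument is the right one, but the load-bearing lemma is missing, so this is a plan rather than a proof.
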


There are now improved bounds on $c_n^*$ \cite{KMS12} (see also \cite{MW11}).  When $f$ is linear, we obtain the classic version of the first selection lemma.  A simplified proof of the result above is contained 
in~\cite{Karasev:2012bj}.  Using this result, one can prove a topological version of the weak $\varepsilon$-net result of \cite{Alon:2008ek} with an analogous proof.

\begin{theorem} \label{theorem-weakepsilon-topological}
	Let $n$ be a positive integer, $\varepsilon >0$.  Then, there is a positive integer $m_{\operatorname{top}} = m_{\operatorname{top}} (n, \varepsilon)$ such that the following holds.  For a positive integer $N$, let $\Delta^{N-1}$ be the $(N-1)$-dimensional simplex, with $N$ vertices.  For every $N \ge \varepsilon^{-1}(d+1)$ and every continuous map $f:\Delta^{N-1} \to \rr^n$, there is a set $\mathcal{T} \subset \rr^n$ of at most $m$ points such that the following holds.  For any set $A \subset \Delta^{N-1}$ of at least $\varepsilon N$ vertices,
	\[
	f[\langle A \rangle] \cap \mathcal{T} \neq \emptyset
	\]
	where $\langle A \rangle$ denotes the face of $\Delta^{N-1}$ generated by $A$.  Moreover, $m_{\operatorname{top}} = O(\varepsilon^{-n-1})$ where the implied constant of the $O$ notation depends on $n$.
\end{theorem}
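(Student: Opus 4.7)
The plan is to transport the original weak-$\varepsilon$-net proof of Alon, B\'ar\'any, F\"uredi and Kleitman~\cite{Alon:2008ek} to the topological setting, replacing the linear first selection lemma by Gromov's topological first selection lemma throughout. The structural observation is that the original argument never uses convexity of the simplices it manipulates: at every step it only asks for a point lying in the image of many $n$-faces of the ambient simplex under a prescribed map, and Gromov's theorem supplies exactly such a point for any continuous $f$.

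First, I would establish a topological analogue of the second selection lemma: given any subfamily $\mathcal{F}$ of $n$-faces of $\Delta^{N-1}$ with $|\mathcal{F}|\ge\alpha\binom{N}{n+1}$, there exists a point $z\in\rr^n$ whose $f$-preimage meets at least $\beta(n,\alpha)\binom{N}{n+1}$ faces of $\mathcal{F}$, with $\beta(n,\alpha)$ a positive polynomial in $\alpha$. The standard derivation from first selection proceeds by random sampling: draw a uniformly random set of $M=M(n,\alpha)$ vertices, apply Gromov's theorem to the restricted map $f|_{\Delta^{M-1}}$ to pick a point hitting a $c_n^*$-fraction of the sub-$n$-faces, and double-count to show that in expectation a positive fraction of the hit faces lie in $\mathcal{F}$ itself.

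Second, I would build $\mathcal{T}$ greedily. Initialise $\mathcal{F}_0$ to be the family of all $n$-faces of $\Delta^{N-1}$. While $|\mathcal{F}_i|\ge\binom{\lceil\varepsilon N\rceil}{n+1}$, apply the topological second selection lemma to $\mathcal{F}_i$ to find a point $p_{i+1}\in\rr^n$, add it to $\mathcal{T}$, and set $\mathcal{F}_{i+1}$ to be $\mathcal{F}_i$ with every face whose image under $f$ contains $p_{i+1}$ removed. At termination, any $A\subset\Delta^{N-1}$ with $|A|\ge\varepsilon N$ spans a face $\langle A\rangle$ whose $\binom{|A|}{n+1}\ge\binom{\lceil\varepsilon N\rceil}{n+1}$ $n$-subfaces cannot all lie in the surviving family, so some such subface has been removed by some $p_j$, giving $p_j\in f(\langle A\rangle)\cap\mathcal{T}$.

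Finally, the bound $m_{\operatorname{top}}=O(\varepsilon^{-n-1})$ follows, exactly as in~\cite{Alon:2008ek}, by analysing the greedy recursion $\alpha_{i+1}\le\alpha_i-\beta(n,\alpha_i)$ with $\alpha_i=|\mathcal{F}_i|/\binom{N}{n+1}$ down to the threshold $\alpha\sim\varepsilon^{n+1}$. The main obstacle is thus localised to the first step: extracting a quantitatively sharp topological second selection lemma from Gromov's theorem with a polynomial dependence of $\beta$ on $\alpha$ strong enough to recover the target exponent. Once this is in hand, the iteration and the verification of the $\varepsilon$-net property are purely combinatorial and carry over from the linear proof unchanged.
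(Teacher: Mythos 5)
Your overall greedy scheme is sound, but the detour through a ``topological second selection lemma'' is both harder than necessary and incompatible with the bound you need to prove; this is where the argument breaks. The paper's proof (which is the actual Alon--B\'ar\'any--F\"uredi--Kleitman argument) never invokes a second selection lemma: whenever some $A$ with $|A|\ge\varepsilon N$ has $f[\langle A\rangle]\cap\mathcal{T}=\emptyset$, then every $(n+1)$-subset $B\subset A$ also has $f[\langle B\rangle]\cap\mathcal{T}=\emptyset$ (since $f[\langle B\rangle]\subset f[\langle A\rangle]$), so one applies Gromov's theorem directly to the restriction of $f$ to $\langle A\rangle$ and obtains a single point lying in the images of at least $c_n^*\binom{\varepsilon N}{n+1}=\Theta(\varepsilon^{n+1})\binom{N}{n+1}$ \emph{previously uncovered} $n$-faces. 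The counter of uncovered $(n+1)$-subsets, initially $\binom{N}{n+1}$, therefore drops by a fixed $\Theta(\varepsilon^{n+1})$ fraction of its initial value at every step, giving $m_{\operatorname{top}}\le(c_n^*\varepsilon^{n+1})^{-1}$ at once.

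Two concrete problems with your route. First, the quantitative target is unreachable by your recursion: any second selection lemma has $\beta(n,\alpha)\le C\alpha^{s}$ with $s\ge n+1$ (standard lower-bound constructions show the exponent cannot beat $n+1$), and the recursion $\alpha_{i+1}\le\alpha_i-c\alpha_i^{s}$ run down to the threshold $\alpha\sim\varepsilon^{n+1}$ takes on the order of $\varepsilon^{-(n+1)(s-1)}\ge\varepsilon^{-(n+1)n}$ steps, far worse than the claimed $O(\varepsilon^{-n-1})$. Second, your sampling sketch for the lemma itself does not close: Gromov's theorem applied to a random $M$-vertex subsimplex produces a point in a $c_n^*$ fraction of that sample's $n$-faces, while $\mathcal{F}$ has expected density $\alpha$ among them; when $c_n^*+\alpha<1$ nothing forces these two families to intersect, and the genuine proofs of the second selection lemma need supersaturation and hypergraph machinery well beyond double counting. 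Dropping the second selection lemma entirely and applying the topological first selection lemma to $f|_{\langle A\rangle}$ for an as-yet-uncovered $A$ repairs both issues and is exactly what the paper does.
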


\begin{proof}[Proof of Theorem \ref{theorem-weakepsilon-topological}]
We construct the set $\mathcal{T}$ inductively, by counting the number $K$ of faces $B$ of size $n+1$ such that $f[\langle B\rangle] \cap \mathcal{T} = \emptyset$.  Suppose there is a face $A$ with $|A| \ge \varepsilon N$ such that $f[\langle A \rangle ] \cap \mathcal{T} = \emptyset$.  Then, by the topological version of the first selection lemma applied to $A$, there must be a point $t \in \rr^n$ such that $t \in f[\langle B \rangle]$ for at least 
\[
c^*_n \binom{\varepsilon N}{n+1} \sim \varepsilon^{n+1}c^*_n \binom{N}{n+1}
\]
different subsets $B \subset A$ with $|B|=n+1$.  Thus, by adding the point $t$ to $\mathcal{T}$, we have reduced $K$ by at least $\varepsilon^{n+1}c^*_n \binom{N}{n+1}$.  This process cannot be repeated more than $(\varepsilon^{n+1}c^*_n)^{-1}$ times, so we obtain $m_{\operatorname{top}} \le (\varepsilon^{n+1}c^*_n)^{-1}$, as desired.

\end{proof}

Even though this result and its proof are natural with current methods, it seems that this generalisation has not yet been observed.  The proof of Theorem \ref{theorem-thin-hull-epsilon-net} is almost identical, but we include it for the sake of completeness.

\begin{proof}[Proof of Theorem \ref{theorem-thin-hull-epsilon-net}]
	We construct the set $\mathcal{T}$ inductively, by counting the number $K$ of pairs $\{x,y\} \in \binom{X}{2}$ such that $S(x,y) \cap \mathcal{T} = \emptyset$.  Suppose there is a subset $A$ with $|A|\ge \varepsilon |X|$ such that $\operatorname{thin}_S (A) \cap \mathcal{T} = \emptyset$.  Then, by Theorem \ref{theorem-positive-thin-shapes} applied to $A$, there must be a point $p \in \rr^n$ such that $t \in S(x,y)$ for at least
	\[
	\lambda (\varepsilon |X|)^2 \ge 2\lambda \varepsilon^2 \binom{|X|}{2}
	\]
	ordered pairs $(x,y) \in A \times A$.  Thus, by adding the point $p$ to $\mathcal{T}$, we have reduced $K$ by at least $\lambda \varepsilon^2 \binom{|X|}{2}$.  This process cannot be repeated more than $(\lambda \varepsilon^2)^{-1}$ times, giving the desired bound.
	
\end{proof}

The same proof method has been used to get other extensions of weak $\epsilon$-nets for convex sets, such as quantitative versions in \cite{soberon2015}.  When we apply Theorem \ref{theorem-thin-hull-epsilon-net} to $\alpha$-lenses (Example $1$ in the introduction), we obtain the following corollary.

\begin{corollary}\label{corollary-wide-angle-net}
	For any two real numbers $\alpha \in [0, \pi)$, $\varepsilon\in(0,1]$ and a positive integer $n$, there is an integer $m' = m'(n, \varepsilon, \alpha)$ such that the following holds.  For every finite set $S \subset \rr^n$, there is a set $\mathcal{T} \subset \rr^n$ of $m'$ points such that if $A \subset S$ is a subset of at least $\varepsilon |S|$ points, then there are $x,z \in A$ and $y\in \mathcal{T}$ such that
	\[
	\angle xyz > \alpha.
	\]
	Moreover, $m' = O(\varepsilon^{-2})$, where the implied constant of the $O$ notation depends on $n$ and  $\alpha$
\end{corollary}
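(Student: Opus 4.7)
The plan is to apply Theorem \ref{theorem-thin-hull-epsilon-net} directly with the family of $\alpha$-lenses from Example 2 of the introduction (the corollary cites Example 1, but the lenses are the relevant example). Since the statement demands the \emph{strict} inequality $\angle xyz > \alpha$ and Example 2 only covers lens half-angles in $(0, \pi)$, the first step is to pick an intermediate angle $\alpha' \in (\alpha, \pi)$, for instance $\alpha' = (\alpha + \pi)/2$; in the boundary case $\alpha = 0$ any small $\alpha' > 0$ works equally well.

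By Example 2, the assignment $(x,y) \mapsto S_{\alpha'}(x,y) = \{z \in \rr^n : \angle(x-z, y-z) \ge \alpha'\}$ defines a $t$-shape with $t = t_2(n, \alpha') > 0$ depending only on $n$ and $\alpha$. I then feed this $t$-shape and the set $S$ into Theorem \ref{theorem-thin-hull-epsilon-net}, which produces a set $\mathcal{T} \subset \rr^n$ of cardinality $m' = O(\varepsilon^{-2})$, where the implied constant depends only on $n$ and $t$, hence only on $n$ and $\alpha$. The conclusion of that theorem states that for every $A \subset S$ with $|A| \ge \varepsilon |S|$ the set $\operatorname{thin}_{S_{\alpha'}}(A)$ meets $\mathcal{T}$.

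Unpacking this meeting condition yields points $x, z \in A$ and $y \in \mathcal{T}$ with $y \in S_{\alpha'}(x, z)$, and by the definition of the $\alpha'$-lens this is exactly $\angle xyz \ge \alpha' > \alpha$, which is the desired inequality. The main (and only) obstacle is this initial passage from $\alpha$ to a strictly larger $\alpha' < \pi$: it is what simultaneously unlocks Example 2 and secures strictness in the conclusion. Once that is done, the rest of the argument is a direct reinterpretation of Theorem \ref{theorem-thin-hull-epsilon-net}.
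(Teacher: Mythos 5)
Your proof is correct and is exactly the route the paper intends: the paper gives no explicit argument beyond "apply Theorem \ref{theorem-thin-hull-epsilon-net} to $\alpha$-lenses," and your writeup fills in the two small details it glosses over, namely that the lenses are Example 2 (not Example 1) and that one should pass to some $\alpha' \in (\alpha, \pi)$ to turn the lens's non-strict inequality into the corollary's strict one.
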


In other words, there is a point of $\mathcal{T}$ that \textit{sees} some pair of points of every large subset of $S$ at a wide angle.  It is surprising that this notion of \textit{being close to $S$} allows for weak $\varepsilon$-nets of such small size.

This would be a counterpart to \cite[Theorem 4]{Barany:1987ef}.  In that result, B\'ar\'any and Lehel showed that for every angle $\alpha < \pi$ and every compact set $V \subset \rr^d$ has a subset $S$ of fixed size (depending only on $d$ and $\alpha$) such that every point in $X$ ``sees'' some pair of points of $S$ at an angle wider than $\alpha$.  In other words, if we denote by the $\alpha$-lens of $\{x,y\}$ the set of points $z$ such that $\angle xzy > \alpha$, it says that a fixed number of $\alpha$-lenses of $C$ cover the points in $C$.  In our result, we show that given the set $C$, a fixed number of points intersects ``most'' of the $\alpha$-lenses of $C$.

\bibliographystyle{amsplain}

\bibliography{references}

\noindent A. Magazinov, \\
\textsc{
R\'enyi Mathematical Institute, Budapest, Hungary and Yandex LLC, Moscow, Russia.
}\\[0.3cm]
\noindent P. Sober\'on \\
\textsc{
Department of Mathematics,  Northeastern University, Boston, MA 02115 USA
}\\[0.3cm]

\noindent \textit{E-mail addresses: }\texttt{magazinov-al@yandex.ru, p.soberonbravo@neu.edu}

\end{document}